\newtheorem{theorem}{Theorem}[section]
\newtheorem{proposition}[theorem]{Proposition}
\theoremstyle{definition}
\theoremstyle{remark}
\numberwithin{equation}{section}
\def\DJ{{\hbox{D\kern-.8em\raise.15ex\hbox{--}\kern.35em}}}
\def\DJo{$\;$\kern-.4em
    \hbox{D\kern-.8em\raise.15ex\hbox{--}\kern.35em okovi\'c}}
\renewcommand{\subjclassname}{\textup{2000} Mathematics Subject
Classification }
\begin{document}

\title[Hadamard matrices of small order]
{Hadamard matrices of small order and Yang conjecture}

\author[D.\v{Z}. \DJ okovi\'{c}]
{Dragomir \v{Z}. \DJ okovi\'{c}}

\address{Department of Pure Mathematics, University of Waterloo,
Waterloo, Ontario, N2L 3G1, Canada}

\email{djokovic@uwaterloo.ca}

\keywords{Base sequences, normal and near-normal sequences, T-sequences,
orthogonal designs, Williamson-type matrices, Yang conjecture}

\date{}

\begin{abstract}
We show that 138 odd values of $n<10000$ for which a Hadamard 
matrix of order $4n$ exists have been overlooked in the recent 
handbook of combinatorial designs. There are four additional odd 
$n=191,5767,7081,8249$ in that range for which Hadamard matrices 
of order $4n$ exist. 
There is a unique equivalence class of near-normal sequences 
$NN(36)$, and the same is true for $NN(38)$ and $NN(40)$. This 
means that the Yang conjecture on the existence of near-normal 
sequences $NN(n)$ has been verified for all even $n\le40$, but it still remains open.
\end{abstract}

\maketitle
\subjclassname{ 05B20, 05B30 }
\vskip5mm

\section{Introduction}

Recall that a Hadamard matrix of order $m$ is a 
$\{\pm1\}$-matrix $A$ of size $m\times m$ such that $AA^T=mI_m$, 
where $T$ denotes the transpose and $I_m$ the identity matrix.
Let us denote by $HM(m)$ the set of Hadamard matrices of order 
$m$. By abuse of language, we say that $HM(m)$ exist if 
$HM(m)\ne\emptyset$. If $m>2$ and $HM(m)$ exist, then $m$ is 
divisible by 4. In the recent handbook \cite[pp. 278--279]{HCD} 
one finds a table of all odd integers $n<10000$ and exponents 
$t$ $(2\le t\le 8)$ for which it is known that $HM(2^t n)$ 
exist. The Hadamard conjecture asserts that we should always 
have $t=2$ (for $n>1$). We shall refer to the cases with $t\ge3$ 
as bad cases. The good cases, i.e., those with $t=2$ are 
indicated in the table by a dot.
The table has 5000 entries of which 1006 are bad. In this
note we point out that 138 of these 1006 cases are in fact good.
Additional 4 cases can be also eliminated,
reducing the number of bad cases to 864.

In the next section we recall a well known construction 
(see Proposition \ref{fakt}) of $HM(4n)$ which uses Yang 
multiplication, T-sequences, orthogonal designs, and 
Williamson-type matrices. In section \ref{losi} 
we list 138 bad cases and invoke Proposition \ref{fakt} 
to show that they should have been classified as good cases. 
We also mention four additional bad cases that we can eliminate. 
In section \ref{40-Yang} we show that there exist near-normal
sequences $NN(n)$ for $n=32,34,36,38$ and $40$. 
Thus, despite of our efforts to find a counter-example, the Yang 
conjecture on near-normal sequences still remains open.

\section{Preliminaries}

Let us introduce the following notation for the sets of some
important combinatorial objects (for their definitions see
\cite{HCD,DZ2,DZ3,SY}):
$$
\begin{array}{ll}
GS(g) & \text{Golay sequences of length } g \\
TS(t) & \text{T-sequences of length } t \\
BS(r,s) & \text{Base sequences of lengths } r,s \\
NS(l) & \text{Normal sequences, as a subset of } BS(l+1,l) \\
NN(l) & \text{Near-normal sequences, as a subset of } BS(l+1,l) \\
OD(4d) & \text{Orthogonal designs } OD(4d;d,d,d,d) \\
WT(w) & \text{Williamson-type matrices of order } w \\
BHW(4h) & \text{Baumert--Hall--Welch arrays of order } 4h
\end{array}
$$

It is well known that there exist constructions (i.e., maps)
\begin{eqnarray}
&& GS(g) \to NS(g),  \label{Golay1} \\
&& GS(g) \to BS(g,1), \label{Golay2} \\
&& BS(r,s) \to TS(r+s), \label{bazis} \\
&& NS(l) \times BS(r,s) \to TS((2l+1)(r+s)), \label{Y1} \\
&& NN(l) \times BS(r,s) \to TS((2l+1)(r+s)), \label{Y2} \\
&& BHW(4h) \times TS(t) \to OD(4ht), \label{OD1} \\
&& OD(4d) \times WT(w) \to HM(4dw). \label{OD2} 
\end{eqnarray}

The first three constructions are elementary. They are given by
\begin{eqnarray*}
(A;B) &\to& (A,+;A,-;B;B) \\
(A;B) &\to& (A;B;+;+) \\
(A;B;C;D) &\to& ((A+B)/2,0_s;(A-B)/2,0_s; \\
 && \quad\quad 0_r,(C+D)/2;0_r,(C-D)/2),
\end{eqnarray*}
where $+$ and $-$ stand for $+1$ and $-1$, respectively, comma
denotes the concatenation of sequences, and the symbol $0_d$ denotes
the sequence of $d$ zeros. The arithmetic operations on sequences are
performed component-wise. The constructions (\ref{Y1}) and 
(\ref{Y2}) are due to Yang \cite{CHY}.
For the ``plug in'' constructions (\ref{OD1}) and (\ref{OD2}) 
see \cite{SY} Theorems 3.10 and 3.8, respectively.

Recall that $BHW(4h)$ exist for $h\in\{1,5,9\}$ (see \cite{SY}). 
An integer $g$ is a Golay number if $GS(g)$ exist. It is known that
$2^a 10^b 26^c$, $a,b,c\ge0$ integers, are Golay numbers.
An odd integer $y=2l+1$ is a Yang number if $NS(l)$ or $NN(l)$ exist.
The following well known fact is an immediate consequence.

\begin{proposition} \label{fakt}
If $y$ is a Yang number and $BHW(4h),BS(r,s)$ and $WT(w)$ exist, then
$HM(4n)$ exist for $n=yh(r+s)w$.
\end{proposition}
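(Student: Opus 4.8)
The plan is to chain together the constructions (\ref{Y1})--(\ref{OD2}) listed above, tracking the length and order parameters at each stage so that the output of one construction matches the input required by the next. First I would unpack the hypothesis that $y$ is a Yang number: by definition $y=2l+1$ for some integer $l\ge0$, and at least one of $NS(l)$, $NN(l)$ is nonempty. This forces a case split, but a harmless one, since the two Yang multiplications (\ref{Y1}) and (\ref{Y2}) have identical source and target parameters.

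Next I would feed the base sequences into the appropriate Yang multiplication. Since $BS(r,s)$ exist by hypothesis, applying either (\ref{Y1}) (if $NS(l)\ne\emptyset$) or (\ref{Y2}) (if $NN(l)\ne\emptyset$) to $NS(l)\times BS(r,s)$, respectively $NN(l)\times BS(r,s)$, produces a member of $TS((2l+1)(r+s))=TS(y(r+s))$. Thus T-sequences of length $t=y(r+s)$ exist.

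I would then invoke the two plug-in constructions in turn. Since $BHW(4h)$ exist, construction (\ref{OD1}) applied to $BHW(4h)\times TS(y(r+s))$ yields an orthogonal design in $OD(4h\cdot y(r+s))$, that is, $OD(4d)$ with $d=yh(r+s)$. Finally, since $WT(w)$ exist, construction (\ref{OD2}) applied to $OD(4d)\times WT(w)$ yields a Hadamard matrix in $HM(4dw)=HM(4yh(r+s)w)=HM(4n)$, which is exactly the desired conclusion.

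There is no substantive obstacle here: as the surrounding text already observes, the statement is an immediate consequence of the catalogue of constructions, and the only thing demanding care is the bookkeeping of parameters through the chain. The single genuine choice, namely using (\ref{Y1}) versus (\ref{Y2}), is dictated by which of $NS(l)$ or $NN(l)$ is known to be nonempty; since both land in the same set $TS(y(r+s))$, the remainder of the argument is unaffected by that choice.
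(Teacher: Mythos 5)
Your proof is correct and follows exactly the paper's own argument: apply (\ref{Y1}) or (\ref{Y2}) to get $TS(y(r+s))$, then (\ref{OD1}) to get an $OD(4ht)$, then (\ref{OD2}) to get $HM(4n)$. The only difference is that you spell out the parameter bookkeeping and the harmless $NS(l)$ versus $NN(l)$ case split more explicitly than the paper does.
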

\begin{proof}
We first apply the construction (\ref{Y1}) or (\ref{Y2}), whichever is
appropriate, to obtain T-sequences of length $t=y(r+s)$. Next, 
we apply the construction (\ref{OD1}) to obtain an $OD(4ht)$. 
Finally, the construction (\ref{OD2}) produces an $HM(4n)$.
\end{proof}

\section{An update of the list of bad cases} \label{losi}

Let $\Delta$ be the following set of 138 odd integers $<10000$:
\begin{eqnarray*}
&& \{1397, 2159, 2413, 2773, 2921, 3175, 3953, 4053, 4083, 4097, \\
&& 4181, 4227, 4307, 4389, 4439, 4453, 4479, 4495, 4499, 4589, \\
&& 4633, 4659, 4747, 4765, 4859, 4921, 4981, 5017, 5165, 5199, \\
&& 5201, 5207, 5211, 5259, 5317, 5359, 5363, 5379, 5383, 5411, \\
&& 5461, 5545, 5567, 5597, 5619, 5667, 5709, 5825, 5875, 5913, \\
&& 5915, 5965, 5969, 5979, 5989, 6001, 6059, 6129, 6341, 6351, \\
&& 6369, 6495, 6523, 6605, 6667, 6693, 6707, 6731, 6743, 6755, \\
&& 6805, 6813, 6893, 6953, 6985, 6989, 6995, 7045, 7093, 7223, \\
&& 7325, 7373, 7387, 7413, 7427, 7439, 7471, 7493, 7505, 7571, \\
&& 7613, 7633, 7709, 7765, 7913, 7953, 8033, 8131, 8155, 8197, \\
&& 8299, 8327, 8465, 8477, 8485, 8503, 8509, 8579, 8589, 8633, \\
&& 8655, 8665, 8743, 8833, 8899, 8917, 9005, 9065, 9071, 9083, \\
&& 9087, 9093, 9107, 9169, 9273, 9325, 9365, 9407, 9445, 9485, \\
&& 9515, 9527, 9549, 9553, 9827, 9881, 9959, 9965 \}.
\end{eqnarray*}

According to \cite[pp 278--279]{HCD} all these cases are bad, i.e.,
they all have $t\ge3$.

We claim that the existence of $HM(4n)$ for $n\in\Delta$ has been known
for some time and that all these cases should have been classified as
good. To prove this claim, it suffices to apply the above proposition.
We used only the facts known for a few years prior to the publication 
of \cite{HCD}. In more detail, we used the existence of 
$BS(l+1,l)$ for $l\le35$,
$BS(2l-1,l)$ for even $l\le36$, 
$BS(g_1,g_2)$ for $g_1$ and $g_2$ Golay numbers, and
$NN(l)$ for even $l\le30$.
For Williamson-type matrices $WT(w)$,
we used the listing for odd $w<2000$ given in \cite[Table A.1]{SY} 
with only two changes. Namely we used the fact that $WT(w)$ exist 
for $w=35$ and $w=127$, proven in \cite{DZ1} and
recorded in \cite[Table V.1.50, p. 277]{HCD} as well.
 
Since the verification is of routine nature and tedious, 
we shall just give a few examples (see Table 1) and list the 
acceptable choices for the parameters $y,h,(r,s),w$. 
In some cases there are several such choices, which may give 
different constructions for $HM(4n)$.  

Since $2048$ and $2600$ are 
Golay numbers, it follows from (\ref{Golay1}) that 
$4097=2 \cdot 2048 +1$ and $5201=2\cdot 2600 +1$ are Yang numbers. 
The $BS(r,s)$ that occur in Table 1 can be found in many places,
e.g., \cite{DZ3,SY}, except for the case $(r,s)=(34,33)$. In the 
exceptional case the base sequences were constructed in \cite{KS},
see also below.
Note that $BS(100,1)$, used for the case $n=7373$, exist because 
of (\ref{Golay2}). 

\begin{center}
\bf{Table 1: Parameters for the construction of $HM(4n)$}
\[
\begin{array}{lllll|lllll}
\\
n & y & h & (r,s) & w & n & y & h & (r,s) & w \\
\hline 
&&&&&&&&& \\
2773 & 59 & 1 & (24,23) & 1 &	4495 & 31 & 5 & (15,14) & 1  \\
3953 & 59 & 1 & (34,33) & 1 &	     & 31 & 5 & (1,0) & 29  \\
4097 & 4097 & 1 & (1,0) & 1 &	     & 31 & 1 & (15,14) & 5  \\
     & 1 & 1 & (2049,2048) & 1 &   & 31 & 1 & (3,2) & 29  \\
4389 & 19 & 1 & (17,16) & 7 &	     & 29 & 5 & (16,15) & 1  \\
     & 19 & 1 & (11,10) & 11 &     & 29 & 5 & (1,0) & 31  \\
     & 19 & 1 & (6,5) & 21 & 	     & 29 & 1 & (3,2) & 31  \\
     & 19 & 1 & (4,3) & 33 & 	     & 29 & 1 & (16,15) & 5  \\
     & 11 & 1 & (29,28) & 7 & 5201 & 5201 & 1 & (1,0) & 1  \\
     & 11 & 1 & (11,10) & 19 &     & 1 & 1 & (2601,2600) & 1 \\
     & 11 & 1 & (10,9) & 21 & 5875 & 25 & 5 & (24,23) & 1  \\
     & 11 & 1 & (4,3) & 57 & 	     & 25 & 1 & (24,23) & 5  \\
     & 11 & 1 & (1,0) & 399 &	     & 5 & 1 & (24,23) & 25  \\
     & 7 & 1 & (29,28) & 11 &	     & 5 & 5 & (24,23) & 5  \\
     & 7 & 1 & (17,16) & 19 &	     & 1 & 5 & (24,23) & 25  \\
     & 7 & 1 & (10,9) & 33 & 	5913 & 1 & 1 & (41,40) & 73  \\
     & 7 & 1 & (6,5) & 57 & 	7373 & 1 & 1 & (100,1) & 73  \\
     & 7 & 1 & (1,0) & 627 & 	9065 & 49 & 5 & (19,18) & 1  \\ 
     & 1 & 1 & (6,5) & 399 & 	     & 49 & 5 & (1,0) & 37  \\ 
4453 & 1 & 1 & (31,30) & 73 &	     & 49 & 1 & (19,18) & 5 \\ 
&&&&&     				     & 49 & 1 & (3,2) & 37  \\
&&&&&     				     & 37 & 5 & (25,24) & 1  \\
&&&&&     				     & 37 & 5 & (1,0) & 49  \\
&&&&&     				     & 37 & 1 & (25,24) & 5  \\
&&&&&     				     & 37 & 1 & (3,2) & 49  \\
\end{array}
\]
\end{center}

There are four additional bad cases $n=191,5767,7081,8249$ that we
can take care of, using new results. The construction of $HM(4n)$ for $n=191$ appeared in \cite{DZ4}. 
For the remaining three cases we again apply 
Proposition \ref{fakt}. The acceptable choices
for the parameters are $y=h=1$, $(r,s)=(37,36)$ and
$w=79,97,113$, respectively. The main point is 
that we have shown that $NN(36)$ exist, and consequently 
$BS(37,36)$ exist (see the next section).

\section{The current status of the Yang conjecture} 
\label{40-Yang}

In his paper \cite{CHY} Yang said that "it is likely" that 
$NN(n)$ exist for all even integers $n>0$. This assertion has 
become known as ``Yang conjecture'', see e.g., \cite{HCD,DZ3}. 
Note that, in our notation which is different from
that of Yang, the set $NN(n)$ is empty for odd $n>1$.
In our recent paper \cite{DZ5} we have introduced
an equivalence relation for near-normal sequences, to which we 
refer as $NN$-equivalence. This leads to a canonical form for 
$NN$-equivalence which is too technical to be given here. 
By using this canonical form we were
able to enumerate the $NN$-equivalence classes in $NN(n)$ for even $n\le30$. Subsequently these exhaustive computations 
were extended to cover the cases of all even $n\le40$. 
For the cases $n=32$ and $n=34$ see our notes \cite{DZ6} and 
\cite{DZ7}, respectively. After finding out that there is only 
one $NN$-equivalence class for $n=36$, we lost any hope that 
Yang conjecture may be true in general. However, to our great 
surprise, it turned out that there is again a single 
$NN$-equivalence class in $NN(38)$, and the same holds 
true for $NN(40)$. The computations in the last two cases were 
carried out on SHARCNET's machines running at 3.0 GHz. 
The CPU time for the case $n=40$ was about 1300 days.

Here we give examples of $NN(n)$ for $n=32,34,36,38,40$ in our
encoded form:
\begin{eqnarray*}
n=32 &:& [07651732153537650,1262758654155332], \\
n=34 &:& [076417646512321462,16738541372344337], \\
n=36 &:& [0764841234846532153,165154775335162126], \\
n=38 &:& [07641237828515856281,1782612553714317675], \\
n=40 &:& [058214351717346462170,11868533752571536124].
\end{eqnarray*}
For the reader's convenience we also give an example of 
$BS(34,33)$:
$$ [07651732153537650,1262758654155332]. $$
The encoding scheme is explained in our papers \cite{DZ2,DZ3,DZ5}.

\section{Acknowledgments}

The author is grateful to NSERC for the continuing support of
his research. Part of this work was made possible by the facilities 
of the Shared Hierarchical Academic Research Computing Network 
(SHARCNET:www.sharcnet.ca).

\end{document}